\theoremstyle{definition}
\newtheorem{definition}{\bf Definition}[section]
\newtheorem{theorem}{Theorem}[section]
\newtheorem{lemma}{Lemma}[section]
\newtheorem{cor}{Corollary}[section]
\newtheorem{proposition}{Proposition}[section]
\newtheorem{remark}{Remark}[section]
\newtheorem{example}{Example}[section]
\renewenvironment{proof}{{\bfseries \noindent Proof} }{ \qed \\}
\begin{document}

\def\R{\mathbb{R}}                   
\def\Z{\mathbb{Z}}                   
\def\Q{\mathbb{Q}}                   
\def\C{\mathbb{C}}                   
\def\N{\mathbb{N}}                   
\def\H{{\mathbb H}}                
\def\A{\mathbb{A}}

\def\P{\mathbb{P}}
\def\Gal{\text{Gal}}
\def\GHod{\text{GHod}}
\def\SHod{\text{SHod}}
\def\Hod{\text{Hod}}
\def\res{\text{res}}
\def\prim{\text{prim}}
\def\dR{\text{dR}}
\def\Jac{\text{Jac }}
\def\Hom{\text{Hom}}

\def\ker{{\rm ker}}              
\def\GL{{\rm GL}}                
\def\ker{{\rm ker}}              
\def\coker{{\rm coker}}          
\def\im{{\rm Im}}               
\def\coim{{\rm Coim}}            
\def\NS{{\rm NS}}              
\def\End{{\rm End}}              
\def\rank{{\rm rank}}                
\def\gcd{{\rm gcd}}                  

\begin{center}
{\LARGE\bf On the Picard number and the extension degree of period matrices of complex tori
}
\footnote{ 
Math. classification: 14K20, 14K22, 32G20 11G10.
}
\\
\vspace{.25in} {\large{{\sc Robert Auffarth }}\footnote{Universidad de Chile, Facultad de Ciencias, Departamento de Matemáticas, Las palmeras 3425, Santiago, Chile,
{\tt rfauffar@uchile.cl}}}
 {{\sc and Jorge Duque Franco}}\footnote{ Dirección de Investigación, Vicerrectoría Académica, Instituto de Matemática y Fisíca, Universidad de Talca, Avenida Lircay s/n, Casilla 721, Talca, Chile, {\tt georgy11235@gmail.com}}
\end{center}


\begin{abstract}
The rank $\rho$ of the Néron-Severi group of a complex torus $X$ of dimension $g$ satisfies $0\leq\rho\leq g^2=h^{1,1}.$ The degree $\mathfrak{d}$ of the extension field generated over $\Q$ by the entries of a period matrix of $X$ imposes constraints on its Picard number $\rho$ and, consequently, on the structure of 
$X$. In this paper, we show that when $\mathfrak{d}$ is $2$, $3$, or $4$, the Picard number $\rho$ is necessarily large. Moreover, for an abelian variety $X$ of dimension $g$ with $\mathfrak{d}=3,$  we establish a structure-type result: $X$ must be isogenous to $E^g$, where $E$ is an elliptic curve without complex multiplication. In this case, the Picard number satisfies $\rho(X)=\frac{g(g+1)}{2}.$ As a byproduct, we obtain that if $\mathfrak{d}$ is odd, then $\rho(X)\leq\frac{g(g+1)}{2}.$
\end{abstract}

\section{Introduction}
\label{intro}
The Picard number $\rho$ is the rank of the Néron-Severi group $\NS(X)$ of a complex manifold $X,$ which parameterizes holomorphic line bundles on $X$ modulo analytic equivalence. When $X$ is a Kähler manifold, the Picard number satisfies 
$$\rho\leq h^{1,1}:=\dim H^1(X,\Omega_X^1).$$  
This number is a fundamental invariant of a variety and, in some cases, determines specific structural properties of the variety. For instance, in the case of K$3$ surfaces, if $\rho\geq 5,$ the surface admits an elliptic fibration and when $\rho\geq 12,$ such a fibration exists with a section (see \cite[\S 11.1]{huybrechts2016lectures}). Furthermore, if $\rho\geq 19,$ there is an isomorphism of integral Hodge structures between the transcendental lattices of $X$ and some abelian surface \cite{Morrison1984}. A similar phenomenon occurs for Hyperk\"{a}hler manifolds of K$3^{[n]}$-type with large Picard number, see \cite{prieto2024hyperk} for details.

Another more recent example arises in the context of Fano varieties. In \cite{casagrande2024}, it is shown that if $X$ is a smooth Fano $4$-fold with Picard number $\rho>12,$ then $X$ must be isomorphic to a product $S_1\times S_2,$ where $S_1$ and $S_2$ are del Pezzo surfaces. This result generalizes to dimension $4$ the analogous result for Fano $3$-folds established in \cite{mori1981classification}, which states that if $X$ is a smooth Fano $3$-fold with $\rho>5,$ then $X\simeq S\times \P^1$ where $S$ is a del Pezzo surface.

Another notable example arises in the case of abelian varieties, where structure theorems up to isogeny exist for those with large Picard numbers. Specifically, for $g\geq 5,$ we have 
$$
\rho(X)=(g-1)^2+1\Longleftrightarrow X\sim E_1^{g-1}\times E_2,$$ 
where $E_1$ has complex multiplication, and $E_1$ and $E_2$ are not isogenous. Similarly, for $g\geq 7,$ we have 
$$
\rho(X)=(g-2)^2+4 \Longleftrightarrow X\sim E_1^{g-2}\times E_2^2,$$ 
where $E_1$ and $E_2$ both have complex multiplication but are not isogenous. For more details, see \cite{hulek2019picard}. In the same paper, a structure result is presented for abelian varieties that achieve the largest possible Picard number among $g$-dimensional abelian varieties that split into a product of $r$ non-isogenous factors. More precisely, by Poincaré's Complete Reducibility Theorem, any abelian variety $X$ admits a decomposition of the form 
$$
X\sim A_1^{k_1}\times\dots\times A_r^{k_r}.
$$
Define $r(X):=r$ to be the number of non-isogenous simple factors that appear in the Poincaré decomposition. Now, let
$$
M_{r,g}:=\max \{\rho(X)|\dim X=g,\;\;r(X)=r\}.
$$
In this setting, we have
$$
\rho(X)=M_{r(X),g}\Longleftrightarrow X\sim E^{g-r+1}\times E_1\times\dots\times E_{r-1}, 
$$
where $E$ is an elliptic curve with complex multiplication that is not isogenous to any of the $E_i$'s, and the curves $E_i$ and $E_j$ are pairwise non-isogenous for $i\neq j.$

If a complex torus $X$ of dimension $g$ satisfies $\rho=h^{1,1}=g^2,$ then $X$ must be an abelian variety. Moreover, this occurs if and only if $X$ is isomorphic to a product of mutually isogenous elliptic curves with complex multiplication, which, in turn, happens precisely when the entries of a period matrix of $X$ lie in a quadratic extension of the rational numbers $\Q$. We revisit this last (well-known) fact in \cref{thm:NS1002}, and for a complete characterization, see \cref{theorem:max}. When this condition holds, we say that $X$ is (Picard) $\rho$-maximal. Complex tori/abelian varieties with this property exhibit interesting arithmetic and geometric properties; see, for instance, \cite{beauville2014some, shioda1977singular, shioda1974singular}. 

Our main theorem, closely related to the two previous examples, is a structural type result (up to isogeny) that further reinforces the phenomenon observed throughout the preceding examples.

\begin{theorem}
\label{thm:main}
Let $\Pi=(\tau\hspace{0.2cm}I_g)$ be a period matrix of an abelian variety $X$ of dimension $g\geq 2,$ where $\tau=(\tau_{ij})_{g\times g}\in M_g(\C)$ satisfies $\det(\mathrm{Im}(\tau))\neq0.$ Suppose the degree of the field extension generated by the entries of $\tau$ over $\Q$ is 
$$\mathfrak{d}=[\Q(\{\tau_{ij}\}):\Q]=3.$$ 
Then $X$ is isogenous to a self-product of an elliptic curve without complex multiplication. In this case, the Picard number is $\rho(X)=\frac{g(g+1)}{2}.$ 
\end{theorem}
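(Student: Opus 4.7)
The plan is to first establish the isogeny $X\sim E^g$ with $E$ non-CM by a dimension count on $\End^0(X)$ combined with the constraint $\mathfrak{d}=3$; the formula $\rho(X)=g(g+1)/2$ is then immediate from the standard computation on non-CM self-powers. A rational endomorphism $\phi\in\End^0(X)$ is a $\C$-linear self-map of $\C^g$ preserving $\Lambda\otimes\Q=\tau\Q^g+\Q^g$; writing its matrix as $N=\tau B+D$ and $N\tau=\tau A+C$ for unknown $A,B,C,D\in M_g(\Q)$ turns the condition into the single matrix equation
\[
\tau B\tau + D\tau - \tau A - C = 0.
\]
Decompose $\tau=\tau_0+\tau_1\alpha+\tau_2\alpha^2$ with $\tau_i\in M_g(\Q)$, where $\alpha$ is a primitive element of the cubic field $K=\Q(\{\tau_{ij}\})$. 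Both sides of the above identity lie in $M_g(K)$; since it holds in $\C$ via the injective embedding $K\hookrightarrow\C$ used to form $\tau$, it holds in $M_g(K)$. Expanding in the $\Q$-basis $\{1,\alpha,\alpha^2\}$ (and reducing $\alpha^3,\alpha^4$ by the minimal polynomial of $\alpha$) decomposes the equation into exactly three rational $g\times g$ matrix identities, yielding $3g^2$ linear conditions on the $4g^2$ rational unknowns. Hence $\dim_\Q\End^0(X)\geq g^2$.

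\textbf{Isolating the isogeny class.} By Poincaré's complete reducibility, $X\sim\prod_i A_i^{k_i}$ with the $A_i$ simple and pairwise non-isogenous, and $\End^0(X)=\prod_i M_{k_i}(D_i)$ with $D_i=\End^0(A_i)$. Combining $\dim_\Q\End^0(X)\geq g^2$ with Albert's classification (which bounds $\dim_\Q D_i\leq 2\dim A_i$) and the constraint $\sum_i k_i\dim A_i=g$ forces the isogeny type of $X$ to lie in a short list, the main members of which are (i) $X\sim E^g$ with $E$ an elliptic curve (CM or not), (ii) $X\sim A^{g/2}$ with $A$ a fake elliptic surface ($g$ even), or (iii) $X\sim E_1^{g/2}\times E_2^{g/2}$ with $E_1,E_2$ non-isogenous CM elliptic curves ($g$ even). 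In each of these cases \emph{except} $E^g$ with $E$ non-CM, the period field of $X$ is forced into a (bi)quadratic extension of $\Q$: for $E^g$ with $E$ CM by an imaginary quadratic field $L$, the abelian variety is $\rho$-maximal, and by \cref{thm:NS1002} any period matrix of the form $(\tau, I_g)$ has entries in $L$, so $\mathfrak{d}\leq 2$; analogous period-field constraints (attached to quaternionic multiplication and to products of non-isogenous CM factors) handle cases (ii) and (iii). Each alternative therefore contradicts $\mathfrak{d}=3$, leaving only $X\sim E^g$ with $E$ non-CM.

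\textbf{Picard number and main obstacle.} With $X\sim E^g$ and $E$ non-CM one has $\End^0(X)=M_g(\Q)$ and the Rosati involution is matrix transposition, so $\NS(X)\otimes\Q\cong\text{Sym}_g(\Q)$ has $\Q$-dimension $g(g+1)/2$, giving $\rho(X)=g(g+1)/2$. The delicate step is the second paragraph: while the inequality $\dim_\Q\End^0(X)\geq g^2$ is an essentially formal counting argument, upgrading it to the precise structural claim requires enumerating the possible isogeny types compatible with the bound and showing that the arithmetic hypothesis $\mathfrak{d}=3$ eliminates every one of them but $E^g$ (non-CM). The parity of $\mathfrak{d}$ — so that $K$ has signature $(1,1)$ and hence admits a distinguished real embedding — is expected to play an essential role in making these incompatibilities quantitative, in line with the abstract's byproduct that $\rho(X)\leq g(g+1)/2$ whenever $\mathfrak{d}$ is odd.
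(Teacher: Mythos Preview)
Your opening move is sound and genuinely different from the paper's. The paper bounds the Picard number from below via $\rho(X)\geq g(g+1)/2$ (this is \cref{thm:NS1002}(2), obtained by the same linear-algebra count applied to the N\'eron--Severi relation rather than to endomorphisms) and then bounds $\rho$ from above using the Hulek--Laface inequality \eqref{eq:1102}. Your count $\dim_\Q\End^0(X)\geq g^2$ is the endomorphism analogue, and combined with the Albert bound $\dim_\Q\End^0(A)\leq 2\dim A$ for simple $A$ it leads to the \emph{same} numerical inequality $g^2\leq 2\sum_{j\leq l} n_jk_j^2+\sum_{j>l} k_j^2$ that the paper derives. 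So the two routes converge at the key combinatorial step.

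There are, however, two genuine gaps.

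\textbf{The enumeration is incomplete and in the wrong order.} Your list (i)--(iii) is not exhaustive: for instance $E_1^{k_1}\times E_2^{k_2}$ with both $E_i$ CM satisfies $\dim_\Q\End^0=2(k_1^2+k_2^2)\geq(k_1+k_2)^2$ for \emph{every} pair $k_1,k_2$, and mixed products such as $A\times E^4$ (with $A$ a QM surface and $E$ CM, $g=6$) also meet the bound. The clean fix --- and this is what the paper does --- is to \emph{first} observe that $\mathfrak{d}=3$ forbids any CM elliptic factor (such a factor would produce a quadratic subfield $F_E\subseteq F_X$, impossible since $2\nmid 3$), and only \emph{then} run the inequality. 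With every elliptic factor non-CM one has $d_j=1$ for those factors, and \cref{lemma:1202} forces the clean dichotomy $X\sim A^{g/2}$ (simple surface, $\dim_\Q\End^0(A)=4$) or $X\sim E^g$ (non-CM).

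\textbf{The surface case is not eliminated.} Your sentence ``analogous period-field constraints (attached to quaternionic multiplication \ldots) handle case (ii)'' is the crux, and it is not justified. Note first that $\dim_\Q\End^0(A)=4$ does not force $A$ to be a QM (``fake elliptic'') surface; $A$ could equally well have CM by a quartic field. The claim \emph{can} be proved --- one shows that an indefinite quaternion algebra $D/\Q$ embedding $\Q$-linearly in $M_2(F_A)$ forces $D\otimes_\Q F_A\cong M_2(F_A)$, hence $2\mid[F_A:\Q]$ by a local Brauer-group argument at a ramified prime; and in the quartic-CM case the trace of the analytic representation generates the reflex field inside $F_A$, again of even degree --- but none of this is routine and none of it is in your write-up. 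The paper sidesteps the issue with an elementary bootstrapping trick: if a simple surface $A$ with $\mathfrak{d}_A=3$ existed, pick any non-real entry $\sigma$ of its period matrix, set $Y=\C/\langle 1,\sigma\rangle$, and note that $A\times Y$ is a $3$-dimensional abelian variety with $\mathfrak{d}_{A\times Y}=3$; the dichotomy just established (valid in every dimension, in particular $g=3$) then forces $A\times Y\sim E^3$, contradicting the presence of the simple surface factor.
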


\begin{remark}
    \cref{thm:main} provides a sufficient and rather elementary condition under which an abelian variety $X$ is forced to be isogenous to a self-product of an elliptic curve without complex multiplication. 
    By contrast, in \cite[Theorem~9]{wolfart2005triangle}, Wolfart establishes—under more technical assumptions—sufficient conditions ensuring that the Jacobian $\Jac(C)$ of a curve $C$ is isogenous to a self-product of a simple abelian variety. 
    In Wolfart’s setting, the simple factor may be either an elliptic curve or an abelian surface whose endomorphism algebra satisfies certain additional properties.
\end{remark}

The proof of \cref{thm:main} relies on the Poincaré Complete Reducibility theorem, the upper bound for the Picard number $\rho$ of a self-product of a simple abelian variety established by Hulek and Laface in \cite[Corollary 2.5]{hulek2019picard} and the following result, which provides a lower bound for $\rho.$ This theorem provides sufficient conditions for the Picard number $\rho$ of a complex torus $X$ to be large. More precisely:

\begin{theorem}
\label{thm:NS1002}
Let $\Pi=(\tau\hspace{0.2cm}I_g)$ be a period matrix of a complex torus $X$ of dimension $g,$ where $\tau=(\tau_{ij})_{g\times g}\in M_g(\C)$ satisfies $\det(\mathrm{Im}(\tau))\neq0.$ Let $\mathfrak{d}=[\Q(\{\tau_{ij}\}):\Q]$ denote the degree of the field extension generated by the entries of $\tau$ over $\Q.$ Then we have:
  \begin{enumerate}
      \item  $\mathfrak{d}=2\Longleftrightarrow X$ is $\rho$-maximal. In particular, $X$ is an abelian variety. 

      \item If $\mathfrak{d}=3,$ then $\frac{g(g+1)}{2}\leq \rho<g^2$.
      
      \item If $\mathfrak{d}=4,$ then $g\leq\rho<g^2$.  
  \end{enumerate}
\end{theorem}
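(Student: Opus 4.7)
The plan is to express the Picard number of $X$ as the $\Q$-dimension of the solution space of the first Riemann relation, and then bound that dimension by a linear-algebra count over the number field $K=\Q(\{\tau_{ij}\})$.

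First I will use the standard description
$$\NS(X)_{\Q} = \{E\in M_{2g}(\Q) : E^{T}=-E,\ \Pi E\Pi^{T}=0\},$$
so that $\rho(X)$ equals its $\Q$-dimension. Writing $E=\begin{pmatrix}A & B \\ -B^{T} & D\end{pmatrix}$ with $A,D\in M_g(\Q)$ antisymmetric and $B\in M_g(\Q)$ arbitrary, a direct computation yields
$$\Pi E\Pi^{T} = \tau A\tau^{T} + \tau B - B^{T}\tau^{T} + D,$$
which is itself antisymmetric. Thus the relation $\Pi E\Pi^{T}=0$ imposes $\binom{g}{2}$ entrywise (complex) equations on the $2\binom{g}{2}+g^{2}=g(2g-1)$ rational parameters contained in $(A,B,D)$.

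The key observation is that every entry of $\Pi E\Pi^{T}$ lies in $K$, a $\Q$-vector space of dimension $\mathfrak{d}$. Fixing a $\Q$-basis of $K$ and expanding, each of the $\binom{g}{2}$ entrywise equations decomposes into at most $\mathfrak{d}$ independent $\Q$-linear equations, giving at most $\mathfrak{d}\binom{g}{2}$ constraints in total. Therefore
$$\rho(X)\ \geq\ g(2g-1)\ -\ \mathfrak{d}\cdot\frac{g(g-1)}{2}.$$
Plugging in $\mathfrak{d}=2,3,4$ yields the lower bounds $g^{2}$, $\tfrac{g(g+1)}{2}$ and $g$, respectively. Since $\rho(X)\leq h^{1,1}=g^{2}$, the case $\mathfrak{d}=2$ forces equality, establishing the forward implication of (1) together with the lower bounds in (2) and (3).

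For the converse of (1) I will invoke the classical characterization referenced in \cref{theorem:max}: a $\rho$-maximal torus is isomorphic to a product of pairwise isogenous CM elliptic curves, and hence admits a period matrix whose entries lie in an imaginary quadratic field. A short check that the field $\Q(\{\tau_{ij}\})$ is preserved by the change-of-basis action $\tau\mapsto(\tau b+d)^{-1}(\tau a+c)$ coming from $\GL_{2g}(\Z)$ (its inverse lies in the same group, so the induced field inclusions go both ways) then yields $\mathfrak{d}=2$ for any normalized period matrix. The strict inequalities $\rho<g^{2}$ in parts (2) and (3) are immediate by contraposition of (1). The step I expect to require the most care is justifying that each complex entrywise equation contributes at most $\mathfrak{d}$ independent $\Q$-linear equations; this is a consequence of $\dim_{\Q}K=\mathfrak{d}$, but it must be spelled out in order to keep the constant $\mathfrak{d}$ sharp in the final count.
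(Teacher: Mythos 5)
Your proposal is correct and follows essentially the same route as the paper: the lower bounds come from counting the $\binom{g}{2}$ complex entries of the Riemann relation, each splitting into at most $\mathfrak{d}$ rational equations because $K=\Q(\{\tau_{ij}\})$ is a field (so the quadratic coefficients $\tau_{ki}\tau_{lj}$ still lie in the $\mathfrak{d}$-dimensional $\Q$-space $K$), and the converse of (1) uses Beauville's classification of $\rho$-maximal tori together with the invariance of $\Q(\{\tau_{ij}\})$ under change of symplectic basis/isogeny, exactly as in the paper. The step you flag as delicate is handled in the paper in the same way, with the minor refinement that each equation contributes at most $\mathfrak{d}_{ij}\leq\mathfrak{d}$ independent conditions.
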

The proof of \cref{thm:NS1002} relies on an estimate of the dimension of $\ker (T)$ over $\Q$, where $T$ is a certain matrix satisfying $\rho(X)=\dim_{\Q}\ker (T);$ see \cref{Th:NS} for further details. Now, if $X$ is an abelian variety and $\mathfrak{d}=2$ or $\mathfrak{d}=3$ we have a characterization of $X,$ as given in \cref{theorem:max} and \cref{thm:main} respectively. While a similar characterization is not available for $\mathfrak{d}=4$, by combining \cref{thm:NS1002} and \cref{prop:1303} we can further restrict the possible values of the Picard number $\rho$. By \cref{theorem:max} and the additivity of the Picard number, the more powers of elliptic curves with complex multiplication an abelian variety $X$ contains, the larger its Picard number. \cref{prop:1303} provides further evidence of this phenomenon. In particular, it implies that if the degree $\mathfrak{d}$ of the field extension over $\Q$ associated with the period matrix of $X$ is odd, then $\rho\leq\frac{g(g+1)}{2},$ see \cref{cor:1303}.

The structure of this paper is as follows: In \cref{sec2}, we introduce the key ingredients for our proofs. \cref{sec3} is dedicated to proving \cref{thm:main} and \cref{thm:NS1002}, as well as establishing several interesting consequences.

\bigskip

\noindent\textbf{Acknowledgements.} The authors are grateful to Roberto Villaflor for several stimulating conversations. The first author was supported by the Fondecyt ANID grant 1220997. The second author was supported by the Fondecyt ANID postdoctoral grant 3220631.

\section{Preliminaries}
\label{sec2}
In this section, we will show how to calculate the Picard number of a complex torus $X$ in terms of a period matrix for $X$. This approach allows us to establish lower bounds for the Picard number $\rho$ and, as an application, derive \cref{thm:NS1002}. We begin by computing the Picard number $\rho$ in terms of the rank over $\Q$ of a certain matrix $T$. We recall \cite[Proposition 1.1]{birkenhake1999complex} that $X$ has a period matrix of the form $(\tau\hspace{0.2cm}I_g)$, where $\det(\mathrm{Im}(\tau))\neq0$, and $T$ will be constructed from the entries of $\tau$. One key advantage of this approach is that, in concrete examples, $T$ can be implemented computationally using only the entries of $\tau.$ We will refer to both $\tau$ and $(\tau\hspace{0.2cm}I_g)$ as period matrices for $X$, depending on the context.

\begin{proposition}
\label{Th:NS}
    Let $\Pi=(\tau\hspace{0.2cm}I_g)$ be a period matrix of a complex torus $X$ of dimension $g,$ where $\tau=(\tau_{ij})\in M_g(\C)$ satisfies $\det(\mathrm{Im}(\tau))\neq0.$ Consider the linear map
    
    \begin{equation*}
 \begin{array}{cccc}
     T:\Q^{\frac{g(g-1)}{2}}\times\Q^{g^2}\times\Q^{\frac{g(g-1)}{2}}  & \longmapsto & \C^{\frac{g(g-1)}{2}} \\
      ((a_{ij})_{1\leq i<j\leq g},(b_{ij}),(c_{ij})_{1\leq i<j\leq g}) & \longmapsto & (w_{ij})_{i<j}
\end{array}
\end{equation*}
where   
$$
w_{ij}=a_{ij}+\sum_{k=1}^g(b_{jk}\tau_{ki}-b_{ik}\tau_{kj})+\sum_{1\leq l< k\leq g}c_{lk}(\tau_{kj}\tau_{li}-\tau_{lj}\tau_{ki}).
$$
with $i,j\in\{1,\dots,g\}$. Then the Picard number $\rho=\rank\NS(X)$ satisfies 
$$
\rho=2g^2-g-\rank_{\Q} (T).
$$
\end{proposition}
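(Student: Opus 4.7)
The plan is to identify $\NS_{\Q}(X)$ explicitly as a subspace of rational alternating $2g\times 2g$ matrices cut out by $T$, and then apply rank--nullity. By Appell--Humbert (equivalently, the Lefschetz $(1,1)$-theorem), $\NS(X)$ consists of the alternating $\Z$-bilinear forms $E\colon\Lambda\times\Lambda\to\Z$ that arise as the imaginary part of a Hermitian form on $V=\C^{g}$; equivalently, after $\C$-linear extension to $\Lambda\otimes_{\Z}\C=V\oplus\bar V$, the $(2,0)$-component of $E$ vanishes. Tensoring with $\Q$, we obtain $\rho=\dim_{\Q}\NS_{\Q}(X)$, where $\NS_{\Q}(X)$ is identified with the $\Q$-subspace of alternating $2g\times 2g$ rational matrices---expressed in the lattice basis $\lambda_1,\dots,\lambda_{2g}$ given by the columns of $\Pi$---that are of type $(1,1)$.

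I would then write such a matrix in the block form
\[
M=\begin{pmatrix} A & B\\ -B^{t} & C\end{pmatrix},
\]
with $A,C$ alternating $g\times g$ (contributing the free parameters $a_{ij},c_{ij}$ for $i<j$) and $B=(b_{ij})$ an arbitrary $g\times g$ matrix; the $\Q$-space of such $M$ has dimension $g(g-1)+g^{2}=2g^{2}-g$, matching the domain of $T$.

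The hypothesis $\det(\mathrm{Im}\,\tau)\ne 0$ makes the $2g\times 2g$ matrix $\begin{pmatrix}\Pi\\ \bar\Pi\end{pmatrix}$ invertible, so one can pin down an explicit $2g\times g$ matrix $\bar S$ whose columns form a $\C$-basis of $\bar V$ (characterized by $\Pi\bar S=0$); a short calculation gives $\bar S=\begin{pmatrix}-D\\ \tau D\end{pmatrix}$ with $D=(\tau-\bar\tau)^{-1}$. The $(0,2)$-component of $M$ is then the alternating $g\times g$ complex matrix $\bar S^{t}M\bar S$, and a direct block computation yields
\[
\bar S^{t}M\bar S=D^{t}\bigl[A+\tau^{t}B^{t}-B\tau+\tau^{t}C\tau\bigr]D.
\]
Since $D$ is invertible, $M$ is of type $(1,1)$ if and only if the bracketed alternating matrix is zero. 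Reading off its $(i,j)$-entry for $i<j$ reproduces term by term the scalar $w_{ij}$ in the statement (the diagonal entries vanish automatically by alternation, and the sub-diagonal entries are redundant by antisymmetry), so $\NS_{\Q}(X)\simeq\ker T$; rank--nullity applied to $T\colon\Q^{2g^{2}-g}\to\C^{g(g-1)/2}$ then gives $\rho=\dim_{\Q}\ker T=2g^{2}-g-\rank_{\Q}T$.

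The main obstacle is the block-matrix computation leading to the displayed identity for $\bar S^{t}M\bar S$, together with the careful index bookkeeping needed to match its strict upper-triangular entries with the scalars $w_{ij}$ of the statement; everything else follows formally from the parameterization above.
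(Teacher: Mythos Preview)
Your proof is correct and follows essentially the same approach as the paper: both identify $\NS_\Q(X)$ with the rational alternating block matrices satisfying $A-B\tau+\tau^tB^t+\tau^tC\tau=0$ and then apply rank--nullity. The only difference is that the paper cites \cite[Proposition 1.3.4]{birkenhake1999complex} for this $(1,1)$-condition, whereas you derive it directly by computing $\bar S^{t}M\bar S$.
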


\begin{proof}
The Néron Severi group $\NS(X)$ can be identified with the matrix group 
\begin{equation*}
    \begin{pmatrix}
        A & B \\
        -B^{t} & C
    \end{pmatrix}
    \in M_{2g}(\R)
\end{equation*}
 where $A$ and $C$ are skew-symmetric matrices satisfying the condition 
 $$W:=A-B\tau+\tau^tB^t+\tau^tC\tau=0,$$ 
 see \cite[Proposition 1.3.4]{birkenhake1999complex}. The entries of the skew-symmetric matrix $W$ are explicitly given by
 $$
w_{ij}=a_{ij}+\sum_{k=1}^g(b_{jk}\tau_{ki}-b_{ik}\tau_{kj})+\sum_{1\leq l< k\leq g}c_{lk}(\tau_{kj}\tau_{li}-\tau_{lj}\tau_{ki})
 $$
where $a_{ij},$ $b_{ij},$ $c_{ij}$ are the entries of the matrices $A,$ $B,$ and $C$ respectively. Given that
$$
T((a_{i,j})_{i<j},(b_{ij}),(c_{ij})_{i<j})=(w_{ij})_{i<j},
$$
we conclude that $\rho=\dim_\Q \ker(T).$ Finally, applying the rank-nullity theorem, we obtain the desired result.   
\end{proof}

\begin{remark}
The matrix associated with the linear map $T$ can be expressed as

 \begin{equation*}
    T=\begin{pmatrix}
        Id_{\frac{g(g-1)}{2}\times \frac{g(g-1)}{2}} & \mathcal{B}_{\frac{g(g-1)}{2}\times g^2} & \mathcal{C}_{\frac{g(g-1)}{2}\times \frac{g(g-1)}{2}} 
    \end{pmatrix}
    \in M_{\frac{g(g-1)}{2}\times 2g^2-g}(\C)
\end{equation*}
 where the matrices $\mathcal{B}$ and $\mathcal{C}$ are defined as follows: \\

\noindent\textbf{Definition of $\mathcal{B}$:} Let $(i_0,j_0)\in \{1,\dots,g\}^2$ be indexed with the lexicographic order, and let $\tau=(\tau_{ij})_{g\times g}$ be the period matrix. The matrix $\mathcal{B}\in M_{\frac{g(g-1)}{2}\times g^2}(\C)$ is defined such that its columns $(i_0,j_0)$ are given by 
    
    \begin{equation*}
\mathcal{B}_{(i,j)(i_0,j_0)} = \left\lbrace
\begin{array}{ll}
\tau_{j_0i} & i<i_0 \\
-\tau_{j_0j}  & i_0<j\\
0 & \text{other cases.}
\end{array}
\right.
\end{equation*}
where $i<j\in \{1,\dots,g\}$.\\

\noindent\textbf{Definition of $\mathcal{C}$:} The matrix $\mathcal{C}\in M_{\frac{g(g-1)}{2}d}(\C)$ is defined by its entries $\mathcal{C}_{(i,j)(i_0,j_0)},$ for $i<j$ and $i_0<j_0$, given by of the determinant of the $2\times 2$ submatrix of $\tau$ formed by 
\begin{equation*}
    \begin{pmatrix}
        \tau_{i_0i} & \tau_{i_0j} \\
         \tau_{j_0i} & \tau_{j_0j}
    \end{pmatrix}.
\end{equation*}   
\end{remark}

\begin{example}
Using the description of the linear map $T$ from the previous remark, we can directly compute $T$ in the $2$-dimensional case. Specifically, we obtain
$$T=(1,-\tau_{12},-\tau_{22},\tau_{11},\tau_{21},\det (\tau)).$$
Consequently, we recover the well-known formula

$$\rho= 6-\dim_{\Q} \langle 1,\tau_{11},\tau_{12},\tau_{21},\tau_{22},\det \tau\rangle,$$ 
see \cite[$\S$2.7]{birkenhake1999complex}.  
\end{example}

By loosening the precision of \cref{Th:NS}, we derive lower bounds for the Picard number $\rho$ of a complex torus. These bounds provide simple criteria for identifying tori with large Picard numbers. We first define one of the fundamental notions of this article:

\begin{definition}
Let $\Pi=(\tau\hspace{0.2cm}I_g)$ be a period matrix of a complex torus $X$ of dimension $g,$ where $\tau=(\tau_{ij})_{g\times g}\in M_g(\C)$ satisfies $\det(\mathrm{Im}(\tau))\neq0.$ Define
$$F_X:=\mathbb{Q}(\{\tau_{ij}\})$$
   $$\mathfrak{d}_X=\mathfrak{d}:=[F_X:\Q]\in\mathbb{N}\cup\{\infty\}.$$ 
\end{definition}

Let us take a look at a few properties of $F_X$:

\begin{proposition}
\label{prop:2302}
Let $f:X\to Y$ be a map between complex tori with period matrices $(\tau\hspace{0.2cm}I_g)$ and $(\sigma\hspace{0.2cm}I_g)$, respectively. Then we have the following: 
\begin{enumerate}
\item If $f$ has finite kernel, then $F_X\subseteq F_Y$, and in particular $\mathfrak{d}_X\leq\mathfrak{d}_Y$.
\item If $f$ is surjective, then $F_Y\subseteq F_X$, and in particular $\mathfrak{d}_X\geq\mathfrak{d}_Y$.
\item If $f$ is an isogeny, then $F_X=F_Y$, and so $\mathfrak{d}_X=\mathfrak{d}_Y$.
\item $F_X=F_{X^\vee}$.
\end{enumerate}
In particular, 3. implies that $F_X$, and therefore $\mathfrak{d}_X$, only depends on $X$ and not on the specific period matrix chosen.
\end{proposition}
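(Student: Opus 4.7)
My plan is to exploit the analytic and rational representations of morphisms of complex tori, together with an explicit computation of a period matrix for the dual torus. Given $f: X \to Y$ with period matrices $(\tau\hspace{0.2cm}I_g)$ and $(\sigma\hspace{0.2cm}I_g)$, there exist a complex $g \times g$ matrix $M$ (the analytic representation) and an integer $2g \times 2g$ matrix $N$ (the rational representation) with $M(\tau\hspace{0.2cm}I_g) = (\sigma\hspace{0.2cm}I_g)N$. Decomposing $N$ into four $g \times g$ integer blocks $N_1, N_2, N_3, N_4$ gives the two key identities
$$M\tau = \sigma N_1 + N_3, \qquad M = \sigma N_2 + N_4,$$
which immediately place both $M$ and $M\tau$ in $M_g(F_Y)$.

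For part (1), if $f$ has finite kernel then $M$ is injective and hence invertible as a $g\times g$ complex matrix. Since $M \in \GL_g(F_Y)$ forces $M^{-1} \in \GL_g(F_Y)$, the identities above give $\tau = M^{-1}(M\tau) \in M_g(F_Y)$, and so $F_X \subseteq F_Y$.

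For part (4), I would compute a period matrix of $X^\vee$ by hand. Starting from $X^\vee = \overline{V}^{\ast}/\Lambda^{\ast}$ with $\Lambda^{\ast} = \{\ell \in \overline{V}^{\ast} : \mathrm{Im}(\ell(\lambda)) \in \Z \text{ for all } \lambda \in \Lambda\}$, and identifying $\overline{V}^{\ast} \cong \C^g$ via $\ell \mapsto (\ell(e_1),\ldots,\ell(e_g))$, I would write $\tau = A + iB$ with $B$ invertible (thanks to $\det(\mathrm{Im}(\tau))\neq 0$) and explicitly solve for a $\Z$-basis of $\Lambda^{\ast}$. A $\C$-linear change of basis in $\overline{V}^{\ast}$, essentially multiplication by $B^t$, then brings the period matrix of $X^\vee$ to the form $(\pm\tau^t\hspace{0.2cm}I_g)$, whose entries generate the same field as those of $\tau$; hence $F_{X^\vee} = F_X$.

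With parts (1) and (4) in hand, part (2) follows by duality: if $f$ is surjective then $f^\vee: Y^\vee \to X^\vee$ has finite kernel, so (1) applied to $f^\vee$ yields $F_{Y^\vee} \subseteq F_{X^\vee}$, which by (4) reads $F_Y \subseteq F_X$. Part (3) then combines (1) and (2), and the concluding remark on independence of the chosen period matrix follows by applying (3) to any isomorphism identifying two presentations of the same torus. The main obstacle I anticipate is the explicit dualization in part (4): setting up the dual lattice concretely, choosing a basis of $\Lambda^{\ast}$ whose coordinates in $\C^g$ lie in $F_X$, and verifying that the resulting matrix is a valid period matrix for $X^\vee$ require some careful (if ultimately routine) bookkeeping.
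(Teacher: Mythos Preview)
Your proposal is correct and follows essentially the same route as the paper: item~1 via the analytic/rational representations and invertibility of the analytic representation over $F_Y$, item~2 by dualizing and invoking items~1 and~4, and item~3 by combining~1 and~2. The only difference is in item~4: the paper simply quotes the standard fact that $X^\vee$ admits the period matrix $(\tau^{t}\hspace{0.2cm}I_g)$, whereas you propose to rederive this by hand from the definition of the dual lattice --- unnecessary extra work, but harmless.
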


\begin{proof}
Clearly the third item follows from the previous two, and the fourth item is trivial since a period matrix for $X^\vee$ is simply $$(\tau^\tau\hspace{0.2cm}I_g).$$

To prove the first item, the homomorphism $f:X\to Y$ between complex tori induces the following equation:
\begin{equation}
\label{eq:0403}
    \rho_a(f)(\tau\hspace{0.2cm}I_g)=\left(\sigma\hspace{0.2cm}I_g\right)\rho_r(f),
    \end{equation}
where $\rho_a(f)$ is the analytic representation of $f$ and $\rho_r(f)$ is its rational representation. Writing 
$$\rho_r(f)=\left(\begin{array}{cc}A&B\\C&D\end{array}\right),\;\; A,B,C,D\in M_g(\mathbb{Z}),$$ 
we obtain in particular that
\[\rho_a(f)=\sigma B+D.\]
This implies that each coefficient of the matrix $\rho_a(f)$ belongs to the field $F_Y$. If $f$ has finite kernel, its analytic representation $\rho_a(f)$ is injective and therefore possesses a left inverse $M$ whose coefficients also lie in $F_Y$. Using \cref{eq:0403} we deduce that
\[\tau=M(\sigma A+C),\]
showing that each coefficient of $\tau$ also lies in $F_Y$. Therefore $F_X\subseteq F_Y$.

Now assume that $f$ is surjective. Then by dualizing, we get an injective morphism $f^\vee:Y^\vee\to X^\vee$, and so by the first and fourth items,
\[F_Y=F_{Y^\vee}\subseteq F_{X^\vee}=F_X.\]
\end{proof}

\begin{remark}
    \label{rem:2703}
    Let $Y$ be a complex subtorus of $X$, with period matrices $(\sigma\hspace{0.2cm}I_g)$ and $(\tau\hspace{0.2cm}I_g)$ respectively. A consequence of \cref{prop:2302} is that $F_Y$ is a subfield of $F_X$, and this only depends on the isogeny classes of $Y$ and $X.$ We will use this fact implicitly throughout the article in various arguments. 
\end{remark}

We now use the number $\mathfrak{d}$ to obtain certain bounds on the Picard number of a complex torus.

\begin{proposition}\label{NS1}
   Let $\Pi=(\tau\hspace{0.2cm}I_g)$ be a period matrix of a complex torus $X$ of dimension $g,$ where $\tau=(\tau_{ij})_{g\times g}\in M_g(\C)$ satisfies $\det(\mathrm{Im}(\tau))\neq0.$ For each pair $(i,j)$, let 
   $$\mathfrak{d}_{ij}=\dim_{\Q} \langle 1,\tau_{ki},\tau_{kj},\tau_{li}\tau_{kj}-\tau_{ki}\tau_{lj}\rangle_{l,k=1,\dots,g}$$ 
   be the dimension of the $\Q$-vector space generated by these elements. Then the Picard number $\rho$ of $X$ satisfies the inequalities
   \begin{equation}
    \label{eq:ineq2001}
   \rho\geq 2g^2-g-\sum_{1\leq i<j\leq g}\mathfrak{d}_{ij}\geq g^2-g(g-1)\left(\frac{\mathfrak{d}}{2}-1\right)  . 
   \end{equation}
\end{proposition}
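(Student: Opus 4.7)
The plan is to use \cref{Th:NS}, which yields $\rho = 2g^2 - g - \rank_\Q(T)$, and to estimate $\rank_\Q(T)$ from above. Looking at the explicit formula
$$w_{ij} = a_{ij} + \sum_{k=1}^g(b_{jk}\tau_{ki} - b_{ik}\tau_{kj}) + \sum_{1\leq l<k\leq g} c_{lk}(\tau_{kj}\tau_{li} - \tau_{lj}\tau_{ki}),$$
I would first observe that the image of $T$ lives in a rather small $\Q$-subspace of $\C^{g(g-1)/2}$, which we can read off coordinate by coordinate.

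For the first inequality, fix a pair $i<j$ and let $V_{ij} \subseteq \C$ denote the $\Q$-vector space spanned by the elements
$$1,\; \tau_{ki},\; \tau_{kj} \quad (k=1,\dots,g), \qquad \tau_{li}\tau_{kj} - \tau_{ki}\tau_{lj}\quad (1\le l<k\le g),$$
so that by definition $\dim_\Q V_{ij} = \mathfrak{d}_{ij}$. The formula above shows that the $(i,j)$-th coordinate of $T$ takes values in $V_{ij}$, hence the image of $T$ is contained in $\bigoplus_{i<j} V_{ij}$. Consequently
$$\rank_\Q(T) \leq \sum_{1\leq i<j\leq g} \mathfrak{d}_{ij},$$
and substituting into \cref{Th:NS} gives the first inequality.

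For the second inequality, each generator of $V_{ij}$ is a polynomial expression in the entries $\tau_{kl}$, so $V_{ij}\subseteq F_X$ and therefore $\mathfrak{d}_{ij} \leq \mathfrak{d}$. Summing over the $\binom{g}{2}=g(g-1)/2$ pairs with $i<j$ gives
$$\sum_{1\leq i<j\leq g}\mathfrak{d}_{ij} \leq \frac{g(g-1)\mathfrak{d}}{2},$$
so
$$\rho \ \geq\ 2g^2 - g - \frac{g(g-1)\mathfrak{d}}{2} \ =\ g^2 - g(g-1)\left(\frac{\mathfrak{d}}{2} - 1\right),$$
which is the second inequality. The argument is essentially bookkeeping; the only mild obstacle is verifying that the generators listed in the definition of $\mathfrak{d}_{ij}$ really do exhaust the $\Q$-span of the $(i,j)$-coordinate of $T$, which is immediate from the explicit formula for $w_{ij}$ recorded in \cref{Th:NS}.
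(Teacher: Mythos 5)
Your proof is correct and follows essentially the same route as the paper: the paper bounds the number of $\Q$-independent equations cutting out $\ker(T)$ by $\sum_{i<j}\mathfrak{d}_{ij}$, which is just the dual formulation of your observation that $\mathrm{Im}(T)\subseteq\bigoplus_{i<j}V_{ij}$, followed by the same estimate $\mathfrak{d}_{ij}\leq\mathfrak{d}$ and the arithmetic simplification. No gap; your image-containment phrasing is, if anything, a slightly cleaner way to justify $\rank_\Q(T)\leq\sum_{1\leq i<j\leq g}\mathfrak{d}_{ij}$.
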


\begin{proof}
According to \cref{Th:NS} the Picard number satisfies $\rho=\dim_\Q \ker(T).$ A vector $((a_{i,j})_{i<j},(b_{ij}),(c_{ij})_{i<j})\in\Q^{2g^2-g}$ belongs to $\ker(T)$ if and only if  
$$
T((a_{i,j})_{i<j},(b_{ij}),(c_{ij})_{i<j})=(w_{ij})_{i<j}=0,
$$
where
$$
w_{ij}=a_{ij}+\sum_{k=1}^g(b_{jk}\tau_{ki}-b_{ik}\tau_{kj})+\sum_{1\leq l< k\leq g}c_{lk}(\tau_{kj}\tau_{li}-\tau_{lj}\tau_{ki}).
 $$
This system consists of $\frac{g(g-1)}{2}$ equations in $2g^2-g$ variables, given by $w_{ij}=0$. The coefficients of each equation $w_{ij}=0$  belong to the $\Q$-vector space  
$$\langle 1,\tau_{ki},\tau_{kj},\tau_{li}\tau_{kj}-\tau_{ki}\tau_{lj}\rangle_{l,k=1,\dots,g}.$$ 
Thus, over $\Q,$ the number of independent equations is at most $\sum_{1\leq i<j\leq g}\mathfrak{d}_{ij},$ with the bound $\mathfrak{d}_{ij}\leq \mathfrak{d}.$ Therefore, the Picard number $\rho$ is at least the difference between the number of variables and the number of independent equations, yielding \cref{eq:ineq2001}.
\end{proof}

We conclude this section with an elementary inequality for positive numbers, which will be used in the proofs of \cref{thm:main} and \cref{prop:1303}.

\begin{lemma}
\label{lemma:1202}
Consider the quantity
$$g=\sum_{j=1}^{l}n_jk_j+\sum_{j=l+1}^{r}k_j,$$
where $n_j\geq 2$ and $k_j\geq1$. Then, the following inequality holds:
$$
2\sum_{j=1}^{l}n_jk_j^2+\sum_{j=l+1}^{r}k_j^2\leq g^2.
$$
Moreover, equality holds if and only if 
$$l=1\wedge n_1=2\wedge r=0 \text{ or } l=0\wedge r=1,$$
that is, if and only if $g=2k_1$ or $g=k_1.$
\end{lemma}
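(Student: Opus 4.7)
The plan is to split $g=S_1+S_2$, where
\[
S_1=\sum_{j=1}^{l}n_jk_j,\qquad S_2=\sum_{j=l+1}^{r}k_j,
\]
so that $g^2=S_1^2+2S_1S_2+S_2^2$, and then bound each of the three pieces separately. The goal is to show
\[
S_1^2\ \ge\ 2\sum_{j=1}^{l}n_jk_j^2,\qquad S_2^2\ \ge\ \sum_{j=l+1}^{r}k_j^2,\qquad 2S_1S_2\ \ge 0,
\]
and then add them.

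For the first bound I would expand
\[
S_1^2=\sum_{j=1}^{l}n_j^2k_j^2+2\sum_{1\le j<j'\le l}n_jn_{j'}k_jk_{j'},
\]
drop the (nonnegative) cross terms, and then use the assumption $n_j\ge 2$ to write $n_j^2k_j^2\ge 2n_jk_j^2$ term by term. The second bound is a direct expansion of $S_2^2$ followed by discarding the nonnegative off-diagonal terms. The third is immediate since $S_1,S_2\ge 0$. Adding these yields the desired inequality $g^2\ge 2\sum_{j\le l}n_jk_j^2+\sum_{j>l}k_j^2$.

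For the equality case I would trace back through exactly which steps could be tight. Equality in the first bound forces (i) $n_j^2=2n_j$ for every $j\le l$ with $k_j\ne 0$, i.e.\ $n_j=2$, and (ii) the vanishing of all cross terms $n_jn_{j'}k_jk_{j'}$, which—since each factor is positive—requires $l\le 1$. Equality in the second bound analogously forces $r-l\le 1$. Equality in the third requires $S_1S_2=0$, i.e.\ either $l=0$ or $r=l$. Combining these: if $S_1=0$ then $l=0$ and $r=1$, giving $g=k_1$; if $S_2=0$ then $r=l\le 1$, and $l=0$ is excluded (it would force $g=0$), so $l=1$, $n_1=2$, $r=0$, giving $g=2k_1$. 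Conversely both these cases obviously produce equality.

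I do not expect any real obstacle: the argument is an exercise in expanding squares and exploiting $n_j\ge 2$ and $k_j\ge 1$. The only point demanding care is the equality analysis, where one must keep track of which of the three slack inequalities is being saturated and rule out degenerate configurations (e.g.\ $l=0=r$) that would otherwise sneak through.
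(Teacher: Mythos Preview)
Your proof is correct and follows essentially the same route as the paper: expand the square of $g=S_1+S_2$, bound $S_1^2\ge\sum n_j^2k_j^2\ge 2\sum n_jk_j^2$ using $n_j\ge2$, bound $S_2^2\ge\sum k_j^2$, and discard the nonnegative cross term $2S_1S_2$. The paper's write-up is terser (it leaves the cross term $2S_1S_2$ implicit in the phrase ``combining inequalities''), whereas you spell out all three pieces and give a more careful equality analysis; the only blemish is the small slip where you write ``$r=l\le1$'' and then conclude ``$r=0$'' with $l=1$, but this is just notational noise inherited from the lemma's indexing convention and does not affect the argument.
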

\begin{proof}
    Consider nonnegative integers $k_1,k_2$. Note that $(k_1+k_2)^2\geq k_1^2+k_2^2,$ with equality if and only if $k_1=0$ or $k_2=0.$ Using induction, it follows that for $k_j\geq 1,$
\begin{equation}
\label{ineq:2002}
\sum_{j=l+1}^{r}k_j^2\leq\left(\sum_{j=l+1}^{r}k_j\right)^2,
\end{equation}
with equality if and only if $l=0$ and $r=1.$ Taking this into account, for $n_j\geq 2$ and $k_j\geq1,$ we obtain
\begin{equation}
\label{ineq:20021}
2\sum_{j=1}^{l}n_jk_j^2\leq\sum_{j=1}^{l}n_j^2k_j^2\leq\left(\sum_{j=1}^{l}n_jk_j \right)^2
\end{equation}
with equality if and only if $l=1$ and $n_1=2.$ Combining inequalities in \cref{ineq:2002,ineq:20021} completes the proof.
\end{proof}

\section{Proof of Theorems}
\label{sec3}
In this section, we build upon the results from \cref{sec2} to prove the two main theorems of this article, namely \cref{thm:NS1002,thm:main}.

\begin{remark}
\label{rem:Kodaira}
Recall that a complex torus $X$ is called $\rho$-maximal if it satisfies $\rho=g^2.$ It turns out that every Picard maximal complex torus is an abelian variety. This is a special case of a more general phenomenon: any compact K\"{a}hler manifold $X$ that is $\rho$-maximal is necessarily projective. 

Indeed, the Kodaira embedding theorem states that $X$ is projective if and only if there exists some positive class $\eta\in\NS_{\Q}(X)=H^{1,1}(X)\cap H^2(X,\Q).$ In this context, the Picard number $\rho$ is bounded above by the Hodge number $h^{1,1},$ and being $\rho$-maximal means that $\rho=h^{1,1}.$ Note that the K\"{a}hler form $\omega$ is already a positive real $(1,1)$-class, ie., $\omega\in H^{1,1}(X)\cap X^2(X,\R)$ and is positive. If $X$ is $\rho$-maximal, then the space $\NS_{\Q}(X)$ is dense in $H^{1,1}(X)\cap X^2(X,\R).$ This guaranteees the existence of a rational $(1,1)$-class sufficiently close to the K\"{a}hler form $\omega$ that remains positive. By the Kodaira embedding theorem, this implies that $X$ is projective.      
\end{remark}

The first part of \cref{thm:NS1002}  states that $X$ is $\rho$-maximal if and only if the field extension associated with its period matrix $\tau$ has degree $\mathfrak{d}=2.$ This result was already known in the literature; see \cite[Exercise 5.6.10]{birkenhake2004complex}. Here, we provide an elementary proof of one direction of this equivalence and, for completeness,  also establish the other direction. \\

\noindent\textbf{Proof of \cref{thm:NS1002}}
Let us first prove Item 1. Observe that the inequality given by the extremes in \cref{eq:ineq2001} is equivalent to
    \begin{equation}
    \label{eq:ineq0306}
   \mathfrak{d}\geq 2\left(1+\frac{g^2-\rho}{g(g-1)} \right)\geq 2.
   \end{equation}
This implies that if $\mathfrak{d}=2$, then $g^2=\rho$, which occurs if and only if $X$ is $\rho$-maximal. Reciprocally, if $X$ is $\rho$-maximal, then by \cref{rem:Kodaira}  it is an abelian variety, and we may apply \cite[Proposition 3]{beauville2014some}. This yields
$$X\simeq E_1\times\dots\times E_g,$$ 
where the $E_i$ are elliptic curves with complex multiplication, all isogenous to each other. If $\langle \sigma_i,1\rangle$ is a lattice for $E_i$, then by \cref{prop:2302} we have $\Q(\sigma_i)=\Q(\sigma_j)$ for all $i,j$. Since the period matrix $(\sigma\hspace{0.2cm}I_g)$ of $E_1\times\dots\times E_g$ is given by $\sigma=\mathrm{diag}(\sigma_1,\ldots,\sigma_g)$, applying \cref{prop:2302} again, we obtain
$$
F_X=\Q(\sigma_1,\ldots,\sigma_g)=\Q(\sigma_1).
$$
Since $E_1$ has complex multiplication, the field $\Q(\sigma_1)$ has degree 2 over $\mathbb{Q}$, and thus $\mathfrak{d}=2$. 
For Items 2. and 3., the lower bounds follow directly from \cref{eq:ineq2001}, while the upper bound is a consequence of item 1.
$\blacksquare$

\begin{remark}
Recall that the algebraic dimension of a complex torus $X$ is defined as the transcendence degree of its field of meromorphic functions $\C(X)$: 
   $$ 
   a(X):=\text{tr }\deg_{\C}\C(X).
   $$
   It turns out that if the Picard number satisfies $\rho(X)=0$, then the algebraic dimension is $a(X)=0$. A simple yet insightful consequence of \cref{thm:NS1002} or \cref{eq:ineq0306} 
   is that a necessary condition for a complex torus $X$ to have Picard number zero is that $\mathfrak{d}\geq 5.$ More generally, if the Picard number satisfies $\rho<g$, then we still have $\mathfrak{d}\geq 5.$ 
\end{remark}

The first part of \cref{thm:NS1002} follows the same general principle as \cite[Proposition 2.6]{duque2023fake}, where the authors characterize the Fermat varieties $X_d^n$ whose group of algebraic cycles $H^n(X_d^n,\Z)_{\text{alg}}$ attains maximal rank $h^{n/2,n/2}$. This is a natural generalization of the concept of $\rho$-maximality. By combining the first part of \cref{thm:NS1002} with \cite[Proposition 3]{beauville2014some}  we obtain the following characterization of $\rho$-maximality in complex tori:

\begin{theorem}
\label{theorem:max}
Let be $X$ a complex torus of dimension $g.$ We have that
$$
\rank_{\Z}\; \End (X)\leq 2g^2
$$
and the following conditions are equivalent
\begin{itemize}
    \item[(i)] $X$ is $\rho$-maximal
    \item[(ii)] $\rank_{\Z}\; \End (X)= 2g^2$
    \item[(iii)] $X$ is isogenous to $E^g,$ where $E$ is an elliptic curve with complex multiplication. 
    \item[(iv)] $X$ is isomorphic to a product of mutually isogenous elliptic curves with complex multiplication. 
    \item[(v)] $\mathfrak{d}_X=2.$
\end{itemize}
\end{theorem}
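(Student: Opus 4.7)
The plan is to combine item 1 of \cref{thm:NS1002} (giving the equivalence (i)~$\Leftrightarrow$~(v)) with \cite[Proposition~3]{beauville2014some}, which supplies the structural equivalences among (i), (iii), (iv), and implicitly (ii). The inequality $\rank_\Z \End(X) \le 2g^2$ is separate and routine: the analytic representation $\rho_a\colon \End(X) \hookrightarrow M_g(\C)$ is an injective $\Z$-algebra homomorphism, so the torsion-free $\Z$-module $\End(X)$ has rank at most $\dim_\R M_g(\C) = 2g^2$.

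For the equivalences I would proceed cyclically. We get (i)~$\Leftrightarrow$~(v) directly from item~1 of \cref{thm:NS1002}. The implication (i)~$\Rightarrow$~(iv) is Beauville's Proposition~3, already invoked in the proof of that item. The step (iv)~$\Rightarrow$~(iii) is trivial, and (iii)~$\Rightarrow$~(v) follows from the isogeny-invariance of $F_X$ recorded in \cref{prop:2302}: if $X$ is isogenous to $E^g$ with $E$ a CM elliptic curve of lattice $\langle \tau, 1\rangle$, then $F_X = F_{E^g} = \Q(\tau)$ is imaginary quadratic, so $\mathfrak{d}_X = 2$. This closes the equivalence among (i), (iii), (iv), (v). For (ii), the easy direction (iv)~$\Rightarrow$~(ii) is the computation $\End(E_1 \times \cdots \times E_g) \otimes \Q \cong M_g(K)$ where $K$ is the common imaginary quadratic CM field of the factors, of $\Q$-dimension $2g^2$.

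The main obstacle is the converse (ii)~$\Rightarrow$~one of the other conditions, say (ii)~$\Rightarrow$~(iii). Under (ii) the image of $\rho_a$ is a $\Z$-lattice of full rank in $M_g(\C)$, so $\End(X) \otimes \Q$ is a $\Q$-form of $M_g(\C)$ as an $\R$-algebra; its center must therefore be an imaginary quadratic field $K$ (since $Z(M_g(\C)) = \C$), and $\End(X) \otimes \Q$ is a central simple $K$-algebra of $K$-dimension $g^2$, split at the unique complex place of $K$. To conclude, one must rule out that this algebra is a nontrivial central simple division algebra over $K$, so that it is forced to be $M_g(K)$; primitive idempotents in $M_g(K)$ then yield an isogeny decomposition $X \sim Y^g$ with $Y$ an elliptic curve satisfying $\End(Y) \otimes \Q = K$, hence $Y$ has CM. I expect this exclusion of nonsplit central simple algebras to be the subtle step, and it is precisely where Beauville's Proposition~3 does the essential work, which is why the authors cite that result rather than giving a self-contained proof.
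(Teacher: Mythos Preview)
Your overall plan matches the paper's, which gives essentially no proof: it cites \cite[Theorem~1-1]{shimizu} for the inequality $\rank_\Z\End(X)\le 2g^2$ and asserts that the five equivalences follow from item~1 of \cref{thm:NS1002} combined with \cite[Proposition~3]{beauville2014some}. Your write-up is more detailed than the paper's and largely sound, but two points need repair.

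\textbf{The inequality.} Injectivity of $\rho_a\colon \End(X)\hookrightarrow M_g(\C)$ over $\Z$ does \emph{not} by itself bound the rank by $\dim_\R M_g(\C)$: witness $\Z[\sqrt 2]\hookrightarrow\R$, which has $\Z$-rank $2$. What you actually need is that $\rho_a\otimes_\Z\R$ remains injective. This holds because $\rho_r\otimes\R\colon \End(X)\otimes\R\hookrightarrow M_{2g}(\R)$ is injective (tensoring an inclusion of free $\Z$-modules with $\R$) and its image lies in the commutant of the complex structure $J$, which is exactly $M_g(\C)$. Then $\rank_\Z\End(X)=\dim_\R(\End(X)\otimes\R)\le 2g^2$.

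\textbf{The implication (ii)$\Rightarrow$(iii).} Your reduction is correct: $A:=\End(X)\otimes\Q$ is a central simple algebra over an imaginary quadratic field $K$ with $[A:K]=g^2$, and one must show $A\cong M_g(K)$. But Beauville's Proposition~3 does \emph{not} do this---it characterises $\rho$-maximality (condition (i)), not maximality of $\rank\End$, so invoking it here is a genuine gap. The missing step is short and independent of Beauville: $A$ acts faithfully on $H_1(X,\Q)\cong\Q^{2g}$, a $K$-vector space of dimension $g$. Writing $A\cong M_m(D)$ with $D$ a division $K$-algebra, $[D:K]=d^2$ and $md=g$, the simple $A$-module has $K$-dimension $md^2=gd$, which must divide $g$; hence $d=1$ and $A\cong M_g(K)$. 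Primitive idempotents then give $X\sim E^g$ with $\End(E)\otimes\Q=K$, so $E$ has CM. (It is plausible that the paper intends the equivalence (ii)$\Leftrightarrow$(iii)---the inequality together with the characterisation of equality---to be part of Shimizu's cited theorem rather than of Beauville's.)
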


The first statement follows from \cite[Theorem 1-1]{shimizu}.

\begin{remark}
\cref{theorem:max} and \cref{thm:main} ilustrates how the Picard number $\rho$ can dictate the structure of a complex torus/abelian variety (see  \cite{hulek2019picard} for further examples where the Picard number $\rho$ determines the structure of an abelian variety).
\end{remark}

In \cite[Corollary 2.5]{hulek2019picard}, Hulek and Laface established an upper bound for the Picard number $\rho$ of the self-product of a simple abelian variety. Specifically, let $A$ be a simple abelian variety of dimension $g$ and let $k\geq 1.$ Then
\begin{equation}
\label{eq:1102}
\rho(A^k)\leq \frac{1}{2}gk(2k+1).
\end{equation}
Using this inequality, we now derive an upper bound for the Picard number $\rho$ of an abelian variety $X$, which is independent of the field of definition of its period matrix.

\begin{proposition} 
\label{prop:1303}
Let $X$ be an abelian variety of dimension $g\geq 2,$ whose Poincaré decomposition is given by
    \begin{equation*}
        X\sim A_1^{k_1}\times\dots\times A_l^{k_l}\times E_{l+1}^{k_{l+1}}\times\dots\times E_{r}^{k_r},
    \end{equation*}
where $A_j$ are simple abelian and $E_j$ are elliptic curves. These factors are pairwise non-isogenous, and the $E_j$ are precisely the elliptic curves with complex multiplication appearing in this decomposition. In this setting, we have the bound
$$
\rho(X)\leq \frac{1}{2}\left(g(g+1)+\sum_{j=l+1}^{r}k_j(k_j-1) \right).
$$
In particular, if $X$ does not contain elliptic curves with complex multiplication, its Picard number satisfies $\rho(X)\leq\frac{g(g+1)}{2}$.
\end{proposition}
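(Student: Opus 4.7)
My plan is to exploit the additivity of the Picard number over non-isogenous simple factors of $X$ and combine it with the Hulek--Laface bound \cref{eq:1102} and the elementary inequality of \cref{lemma:1202}. Since $A_1,\dots,A_l,E_{l+1},\dots,E_r$ are pairwise non-isogenous, the Picard number decomposes as
\[
\rho(X)=\sum_{j=1}^{l}\rho(A_j^{k_j})+\sum_{j=l+1}^{r}\rho(E_j^{k_j}).
\]

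For each CM elliptic factor, \cref{theorem:max} gives $\rho(E_j^{k_j})=k_j^{2}$. For each simple $A_j$ of dimension $n_j\ge 2$, \cref{eq:1102} yields $\rho(A_j^{k_j})\le \tfrac{1}{2}n_j k_j(2k_j+1)$. If some $A_j$ happens to be a non-CM elliptic curve (so $n_j=1$), then $\End^{0}(A_j)=\mathbb{Q}$ has trivial Rosati involution, and the standard identification of $\NS(A_j^{k_j})\otimes\mathbb{Q}$ with the space of symmetric $k_j\times k_j$ rational matrices yields the sharper exact value $\rho(A_j^{k_j})=\tfrac{1}{2}k_j(k_j+1)$.

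Substituting these estimates, multiplying by $2$, and using the identity $g=\sum_{j=1}^{l}n_j k_j+\sum_{j=l+1}^{r}k_j$, one checks that the linear terms cancel on both sides and the inequality to prove collapses to
\[
2\sum_{n_j\ge 2}n_j k_j^{2}\;+\;\sum_{n_j=1}k_j^{2}\;+\;\sum_{j=l+1}^{r}k_j^{2}\;\le\;g^{2}.
\]
This is exactly \cref{lemma:1202}, applied with the first group (multipliers $n_j\ge 2$) consisting of the simple $A_j$'s of dimension at least two, and the second group (multiplier one) consisting of all elliptic curves appearing in the decomposition---the non-CM $A_j$'s together with the CM $E_j$'s. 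The final assertion of the proposition is the special case $l=r$.

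The main obstacle is handling a potential non-CM elliptic factor among the $A_j$'s. For such a factor, the Hulek--Laface inequality is too weak to fit the hypothesis $n_j\ge 2$ of \cref{lemma:1202}: a direct substitution would produce the term $2k_j^{2}$ where only $k_j^{2}$ is available on the right. Replacing the generic bound by the sharp value $\tfrac{1}{2}k(k+1)$ for $\rho(E^{k})$ with $E$ non-CM reclassifies these factors into the ``coefficient one'' column of \cref{lemma:1202} and lets the reduction go through cleanly.
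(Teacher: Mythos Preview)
Your argument is correct and follows essentially the same route as the paper's proof: both separate the non-CM elliptic factors among the $A_j$'s from the higher-dimensional ones, use the exact values $\rho(E^{k})=\tfrac{k(k+1)}{2}$ (non-CM) and $\rho(E^{k})=k^{2}$ (CM) together with the Hulek--Laface bound for the remaining factors, and then reduce via the dimension identity to the inequality of \cref{lemma:1202}. Your diagnosis of why the non-CM elliptic factors must be treated separately is exactly the point the paper addresses by introducing the cut-off index $l_1$.
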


\begin{proof}
 Let $n_j$ denote the dimension of $A_j,$ and assume that for some $1\leq l_1\leq l,$ the abelian varieties $A_j$ are elliptic curves without complex multiplication for $l_1\leq j\leq l.$ Then, the Picard number of the powers of $E_j$ satisfies
 $$
 \rho(E_j^{k_j})=\frac{k_j(k_j+1)}{2}\; \text{ for } l_1\leq j \leq  l,$$
 and
 $$
 \rho(E_j^{k_j})=k_j^2 \; \text{ for } l+1\leq j \leq  r.
 $$
On the other hand, recall that the Picard number is additive (but not strongly additive), meaning that if $A_1$ and $A_2$ are non-isogenous simple abelian varieties, then
    $$
    \rho(A_1\times A_2)=\rho(A_1)+\rho(A_2),
    $$
    see \cite[Corollary 2.3]{hulek2019picard}. Thus, using the additivity of the Picard number together with \cref{eq:1102}, we obtain

\begin{equation*}
 \begin{split}
     \rho(X)&\leq\frac{1}{2}\sum_{j=1}^{l_1-1}n_jk_j(2k_j+1)+\frac{1}{2}\sum_{j=l_1}^{l}k_j(k_j+1)+\sum_{j=l+1}^{r}k_j^2\\
     &=\frac{1}{2}\left(g+2\sum_{j=1}^{l_1-1}n_jk_j^2+\sum_{j=l_1}^{l}k_j^2+2\sum_{j=l+1}^{r}k_j^2-\sum_{j=l+1}^{r}k_j  \right)\\
     &\leq \frac{1}{2}\left (g^2+g+ \sum_{j=l+1}^{r}k_j(k_j-1) \right) \text{ by \cref{lemma:1202}},
 \end{split}
 \end{equation*}
 where we have used the identity $g=\sum_{j=1}^{l_1-1}n_jk_j+\sum_{j=l_1}^{r}k_j.$
\end{proof}

\cref{prop:1303} enables us to deduce the following result, which further illustrates how the field over $\Q$ defined by the period matrix of an abelian variety $X$ imposes restrictions on the Picard number $\rho$.

\begin{cor}
    \label{cor:1303}
    Let $\Pi=(\tau\hspace{0.2cm}I_g)$ be a period matrix of an abelian variety $X$ of dimension $g,$ where $\tau=(\tau_{ij})_{g\times g}\in M_g(\C)$ satisfies $\det(\mathrm{Im}(\tau))\neq0.$ If $\mathfrak{d}$ is odd, then $\rho\leq\frac{g(g+1)}{2}.$  
\end{cor}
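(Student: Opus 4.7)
The plan is to reduce \cref{cor:1303} to \cref{prop:1303} by showing that the odd-degree hypothesis rules out the presence of any CM elliptic curve factor in the Poincaré decomposition of $X$. Once this is established, the bound $\rho(X)\leq\frac{g(g+1)}{2}$ follows immediately from the ``In particular'' clause of \cref{prop:1303}.

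First, write the Poincaré decomposition
\[
X\sim A_1^{k_1}\times\dots\times A_l^{k_l}\times E_{l+1}^{k_{l+1}}\times\dots\times E_r^{k_r},
\]
with notation as in \cref{prop:1303}, where the $E_j$ are precisely the CM elliptic curves appearing. I claim $r=l$, i.e.\ no CM elliptic factor appears. Suppose for contradiction that some CM elliptic curve $E=E_j$ is a factor. Because $X$ is isogenous to the right-hand side, \cref{prop:2302}(3) gives $F_X=F_{A_1^{k_1}\times\dots\times E_r^{k_r}}$, and $E$ is a complex subtorus of that product. By \cref{rem:2703}, we therefore have $F_E\subseteq F_X$, so that by the tower law $[F_E:\Q]$ divides $[F_X:\Q]=\mathfrak{d}$.

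Now \cref{theorem:max}, applied to the elliptic curve $E$, asserts that $E$ being $\rho$-maximal (which is automatic in dimension one with CM, since $\rho(E)=1=g^2$) is equivalent to $\mathfrak{d}_E=2$. Thus $2\mid \mathfrak{d}$, contradicting the hypothesis that $\mathfrak{d}$ is odd. Hence $r=l$, i.e.\ $X$ contains no CM elliptic curve in its decomposition, and the bound $\rho(X)\leq \frac{g(g+1)}{2}$ follows from \cref{prop:1303}.

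No step here presents a serious obstacle; the only point requiring a little care is checking that the field-inclusion machinery of \cref{prop:2302} and \cref{rem:2703} applies to a factor of the Poincaré decomposition (which is given only up to isogeny, but the fields $F_{(-)}$ are isogeny invariants). The corollary is essentially a bookkeeping consequence of \cref{theorem:max} (CM elliptic curves force $\mathfrak{d}=2$) combined with \cref{prop:1303} (absence of CM elliptic factors forces $\rho\leq \binom{g+1}{2}$).
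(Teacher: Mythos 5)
Your argument is correct and follows essentially the same route as the paper: the paper likewise uses \cref{prop:2302}/\cref{rem:2703} to conclude that an odd $\mathfrak{d}$ excludes any CM elliptic curve (whose associated field is quadratic) from appearing in $X$, and then invokes the ``in particular'' clause of \cref{prop:1303}. Your extra appeal to \cref{theorem:max} and the explicit divisibility bookkeeping just make precise what the paper leaves implicit.
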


\begin{proof}
    Since $\mathfrak{d}$ is odd, it follows from \cref{prop:2302} (see also \cref{rem:2703}) that X does not contain any elliptic curve whose extension field associated is quadratic, i.e., it does not contain any elliptic curve with complex multiplication. Thus, the result follows immediately from \cref{prop:1303}.
 \end{proof}

In the context of \cref{thm:main}, if $\mathfrak{d}=3$, then using the second part of \cref{thm:NS1002} and \cref{cor:1303}, we already have $\rho(X)=\frac{g(g+1)}{2}$. Thus, it remains to prove the structure result. To this end, we will use the same idea as \cref{prop:1303}, but here we will refine the inequalities further. The proof is presented below:\\

\noindent\textbf{Proof of \cref{thm:main}}
By Poincaré's Complete Reducibility Theorem \cite[Thm. 5.3.7]{birkenhake2004complex}, the abelian variety $X$ admits the decomposition
    \begin{equation}
    \label{eq;dec0602}
        X\sim A_1^{k_1}\times\dots\times A_l^{k_l}\times E_{l+1}^{k_{l+1}}\times\dots\times E_{r}^{k_r},
    \end{equation}
    where $A_j$ are simple abelian varieties of dimensions $n_j\geq 2,$ $E_j$ are elliptic curves that are pairwise non-isogenous, and $k_j\geq 1$. This decomposition is unique up to isogenies and permutations. Since $\mathfrak{d}=3$, the extension field over $\Q$ determined by the period matrix of each elliptic curve $E_j$ has also degree $3$, by \cref{prop:2302}. Consequently, $E_j$ does not admit complex multiplication. 

    On the other hand, recall that the Picard number is additive (but not strongly additive), meaning that if $A_1$ and $A_2$ are non-isogenous simple abelian varieties, then
    $$
    \rho(A_1\times A_2)=\rho(A_1)+\rho(A_2),
    $$
    see \cite[Corollary 2.3]{hulek2019picard} Using this fact, along with the second part of \cref{thm:NS1002}, \cref{eq:1102}, and the identities $$g=\sum_{j=1}^{l}n_jk_j+\sum_{j=l+1}^{r}k_j, \;\;\;\rho(E_j^{k_j})=\frac{k_j(k_j+1)}{2},$$ 
    we obtain the inequality 
\begin{equation*}
    \frac{g(g+1)}{2}\leq\rho(X)=\sum_{j=1}^{l}\rho(A_j^{k_j})+\sum_{j=l+1}^{r}\rho(E_j^{k_j})\leq \frac{1}{2}\left(g+ 2\sum_{j=1}^{l}n_jk_j^2+\sum_{j=l+1}^{r}k_j^2 \right),
    \end{equation*}
from which it follows that
\begin{equation}
\label{eq:1202}
g^2\leq 2\sum_{j=1}^{l}n_jk_j^2+\sum_{j=l+1}^{r}k_j^2.
\end{equation}
By virtue of \cref{lemma:1202}, equality holds in \cref{eq:1202}. Applying \cref{lemma:1202} once more, we conclude that the decomposition in \cref{eq;dec0602} must take the form 
$$
X\sim A^{\frac{g}{2}} \text{ or } X\sim E^{g}, 
$$
where $A$ is a simple abelian surface and $E$ is an elliptic curve without complex multiplication. 

Now if $A$ is a simple abelian surface such that $\frak{d}_A=3$ (which would be the case if $\frak{d}=3$ and $X\sim A^{\frac{g}{2}}$), then let $\tau\in\mathbb{C}\backslash\mathbb{R}$ be an element that appears as an entry in a period matrix of $A$. Define $Y$ to be the elliptic curve given by the lattice $\langle 1,\tau\rangle$. Then $\frak{d}_{A\times Y}=3$, but $A\times Y$ does not obey the classification result above. Therefore, we conclude that such an example does not exist. In particular, if $\frak{d}=3$, then $X$ is isogenous to a self-product of an elliptic curve.
$\hfill \blacksquare$\\

\begin{remark}
From the previous proof, it follows that if an abelian variety $X$ has Picard number $\rho(X)=\frac{g(g+1)}{2}$ and does not contain elliptic curves with complex multiplication then $X$ satisfies the same structure result as \cref{thm:main}.
\end{remark}




\bigskip



\bibliographystyle{alpha}

\bibliography{ref}



\end{document}